\documentclass[11pt]{article}
\usepackage{amsmath,amsthm}
\usepackage{amssymb,mathrsfs}
\usepackage{bm,mathtools}
\usepackage{graphics}
\usepackage{tikz}
\usepackage{float}
\usepackage{enumitem}
\usepackage{bm}
\usetikzlibrary{calc}

\usepackage{xcolor}

\usepackage{geometry}
\usepackage[colorlinks=true,
linkcolor=blue,citecolor=blue,pagebackref,
urlcolor=blue]{hyperref}

\makeatletter
\def\@seccntDot{.}
\def\@seccntformat#1{\csname the#1\endcsname\@seccntDot\hskip 0.5em}
\renewcommand\section{\@startsection{section}{1}{\z@}%
	{18\p@ \@plus 6\p@ \@minus 3\p@}%
	{9\p@ \@plus 6\p@ \@minus 3\p@}%
	{\large\bfseries\boldmath}}
\renewcommand\subsection{\@startsection{subsection}{2}{\z@}%
	{12\p@ \@plus 6\p@ \@minus 3\p@}%
	{3\p@ \@plus 6\p@ \@minus 3\p@}%
	{\bfseries\boldmath}}
\renewcommand\subsubsection{\@startsection{subsubsection}{3}{\z@}%
	{12\p@ \@plus 6\p@ \@minus 3\p@}%
	{\p@}%
	{\bfseries\boldmath}}
\makeatother

\usepackage{microtype}
\usepackage{float}

\newcommand{\keywords}[1]{%
  \par\medskip
  \noindent{\small\textbf{Keywords:} #1\par}
}

\newcommand{\msc}[1]{%
  \par\smallskip
  \noindent{\small\textbf{MSC:} #1\par}
  \medskip
}

\theoremstyle{plain}
\newtheorem{theorem}{Theorem}[section]
\newtheorem{lemma}[theorem]{Lemma}

\newtheorem{conj}[theorem]{Conjecture}

\theoremstyle{definition}
\newtheorem{definition}[theorem]{Definition}

\newtheorem{claim}{Claim}

\allowdisplaybreaks
\DeclareMathOperator{\tr}{tr}

\newcommand{\ii}{\mathrm{i}}
\newcommand{\D}{\Delta}
\newcommand{\eps}{\varepsilon}

\title{A Proof of a Conjecture on Positive and Negative Square Energies of Unicyclic Graphs}

\author{
	Bo Ning\thanks{College of Computer Science \& College of Cryptology and Cyber Science, Nankai University,
		Tianjin 300350, P.R. China. E-mail: \texttt{bo.ning@nankai.edu.cn}.}
	\and 
    Jing Zeng\thanks{College of Cryptology and Cyber Science, Nankai University, Tianjin 300350, P.R. China. E-mail: \texttt{jingzeng@mail.nankai.edu.cn}.}
}

\date{}

\begin{document}
	\maketitle		
	
	\begin{abstract}
		Let $G$ be a unicyclic graph of order $n$, and let $k$ be the length of the unique cycle of $G$. For the adjacency eigenvalues of $G$, let $s^{+}(G)$ and $s^{-}(G)$ denote the sums of the squares of the positive and negative eigenvalues, respectively. Akbari, Kumar, Mohar, Pragada, and Zhang conjectured that, when $k$ is odd, the value of $k$ modulo $4$ determines which of $s^+(G)$ and $s^-(G)$ is greater than $n$. More precisely, if $k\equiv 3\pmod 4$, then $s^+(G)>n>s^-(G)$; if $k\equiv 1\pmod 4$, then $s^+(G)<n<s^-(G)$. We confirm this conjecture.
	\end{abstract}

    \keywords{Positive square energy; negative square energy; unicyclic graph; matching polynomial; Coulson-type integral; graph eigenvalues}
    \msc{05C50, 05C31}

\section{Introduction}\label{sec-intro}

Let $G$ be a finite simple graph of order $n$, and let
$\lambda_1(G)\ge \lambda_2(G)\ge \cdots \ge \lambda_n(G)$
be the eigenvalues of its adjacency matrix. The positive and negative square energies of $G$ are
\[
 s^+(G)=\sum_{\lambda_i(G)>0}\lambda_i(G)^2,
 \qquad
 s^-(G)=\sum_{\lambda_i(G)<0}\lambda_i(G)^2.
\]
They arise naturally in problems concerning spectral lower bounds for graph parameters, refinements of graph energy, and Schatten-type norms; see, for instance,
\cite{AbiadLDGHM23,ElphickFGW16,ElphickLinz24,Nikiforov16,WocjanElphick13,Zhang24}.

Substituting the spectral radius by $s^+(G)$ has successfully extended and strengthened several classical spectral inequalities. For example, improving Stanley's bound $\lambda_1(G)\leq -\frac{1}{2}+\sqrt{2m+\frac{1}{4}}$ \cite{Stanley87}, Wu and Elphick \cite{WuElphick17} proved $\sqrt{s^+}\leq \frac{1}{2}(\sqrt{8m+1}-1)$. Wocjan and Elphick \cite{WocjanElphick13} conjectured a variant of Hoffman's inequality \cite{Hoffman1970}, namely $\chi(G)\geq 1+\max\{\frac{s^+}{s^-},\frac{s^-}{s^+}\}$, which was proved by Ando and Lin \cite{AndoLin15}. Hong's well-known inequality \cite{Hong1988} states that any connected graph with $n$ vertices and $m$ edges satisfies $\lambda_1(G)\leq \sqrt{2m-n+1}$. Extending this, Elphick, Farber, Goldberg, and Wocjan \cite{ElphickFGW16} conjectured that every connected graph on $n$ vertices satisfies $\min\{s^+(G),s^-(G)\}\ge n-1$.
This conjecture is known for several graph classes, including cycles, regular graphs, and various dense or
structured families; see \cite{AbiadLDGHM23,ElphickFGW16,Zhang24}. The general lower bounds have
also been improved recently. Elphick and Linz \cite{ElphickLinz24} studied symmetry and asymmetry
between $s^+(G)$ and $s^-(G)$ and obtained a general lower bound of order $\sqrt n$, which is related to
a chromatic-number result of Ando and Lin \cite{AndoLin15}. More recently, Akbari, Kumar, Mohar and
Pragada \cite{AkbariKMP2025} proved that
for any connected graph $G$ on $n \geq 4$ vertices,
$\min\{s^+(G),s^-(G)\}\geq \frac{3n}{4}$, which is the first linear lower bound for the square energy.
Their proof relies on the following super-additivity result established in  \cite{AkbariKMP2025}:
if $H_1,\ldots,H_r$ are pairwise disjoint induced subgraphs of a graph $G$, then $s^+(G)\ge \sum_{j=1}^r s^+(H_j)$ and $s^-(G)\ge \sum_{j=1}^r s^-(H_j)$. Moreover, equality holds in both inequalities simultaneously if and only if $G$ is the
disjoint union of $H_1,\ldots,H_r$.

Further refinements were proposed and studied by Akbari, Kumar, Mohar, Pragada, and Zhang
\cite{AkbariKMPZ25}. In particular, they suggested the stronger inequality $s^+(G)\ge n$ for connected
graphs on $n$ vertices and at least $n+1$ edges. For this conjecture, they verified it for claw-free graphs and graphs of diameter $2$,
and proved positive square-energy lower bounds for graphs with small domination number.

It is known that 
a unicyclic
graph is a connected graph on $n$ vertices and $n$ edges and it contains exactly one cycle.
Akbari et al. \cite{AkbariKMPZ25} computed the square energies of cycles
exactly. For an odd cycle $C_k$, their result gives
\[
 s^+(C_k)-s^-(C_k)=
 \begin{cases}
 2\bigl(\sec(\frac{\pi}{k})-1\bigr),& k\equiv 3\pmod 4;\\[2mm]
 -2\bigl(\sec(\frac{\pi}{k})-1\bigr),& k\equiv 1\pmod 4.
 \end{cases}
\]
In \cite{AkbariKMPZ25},
they proposed the following conjecture, which concerns all unicyclic graphs with an odd cycle.

\begin{conj}[Akbari--Kumar--Mohar--Pragada--Zhang {\cite{AkbariKMPZ25}}]\label{conjunicyc}
If $G$ is a unicyclic graph of order $n$ whose cycle has odd length $k$, then
\begin{enumerate}[label=(\roman*)]
    \item $s^+(G)>n>s^-(G)$, if $k\equiv 3 \pmod 4$.
    \item $s^+(G)<n<s^-(G)$, if $k\equiv 1 \pmod 4$.
\end{enumerate}
\end{conj}

In this paper, we resolve Conjecture~\ref{conjunicyc}.

\begin{theorem}\label{mainthm}
Let $G$ be a unicyclic graph of order $n$, and let the length of the unique cycle of $G$ be $k$.
\begin{enumerate}[label=(\roman*)]
\item If $k\equiv 0\pmod 2$, then $s^{+}(G)=s^{-}(G)=n$.

\item If $k\equiv 3\pmod 4$, then $s^+(G)>n>s^-(G)$.
\item If $k\equiv 1\pmod 4$, then $s^+(G)<n<s^-(G)$.
\end{enumerate}
\end{theorem}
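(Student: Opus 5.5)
Since $s^+(G)+s^-(G)=\tr A^2=2m=2n$ for a unicyclic graph, every part of Theorem~\ref{mainthm} reduces to determining the sign of
\[
D(G)=s^+(G)-s^-(G)=\sum_i \lambda_i|\lambda_i| .
\]
We need $D=0$ for $k$ even, $D>0$ for $k\equiv 3\pmod 4$, and $D<0$ for $k\equiv 1\pmod 4$. Part (i) is immediate: an even cycle makes $G$ bipartite, so the spectrum is symmetric about $0$ and $D=0$.

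For odd $k$ I would express $D$ through a Coulson-type integral of the characteristic polynomial $\phi(G,x)$. Writing $\lambda|\lambda|=\frac{2}{\pi}\int_0^\infty \bigl(\lambda - \frac{\lambda}{1+\lambda^2/t^2}\bigr)\,dt\cdot(\text{normalization})$, or more concretely using $x|x|$ as a combination of $x\cdot\mathrm{Re}$ of $\log(1+\ii x/t)$-type kernels, one gets
\[
D(G)=c\int_0^\infty t\,\mathrm{Im}\Bigl(\tfrac{d}{dt}\log\phi(G,\ii t)\Bigr)\,dt
\]
for a suitable constant $c>0$, or a variant of this, e.g.\ $D=\frac{4}{\pi}\int_0^\infty t^2\,\mathrm{Im}\,\frac{\phi'(G,\ii t)}{\phi(G,\ii t)}\,dt$ after checking convergence. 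The real work is then in the structure of $\phi$ for unicyclic graphs. By Sachs' theorem, $\phi(G,x)=\mu(G,x)-2\mu(G-C,x)$, where $\mu$ is the matching polynomial and $C$ is the cycle. Matching polynomials have only even/odd parity terms, so $\mu(G,\ii t)=\ii^n M(t)$ and $\mu(G-C,\ii t)=\ii^{n-k}N(t)$, with $M,N$ real and positive for $t>0$ (all coefficients have the same sign after the substitution). Hence
\[
\phi(G,\ii t)=\ii^n\bigl(M(t)-2\ii^{-k}N(t)\bigr),
\]
where $\ii^{-k}=\ii$ if $k\equiv 3\pmod 4$ and $-\ii$ if $k\equiv1\pmod 4$. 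Thus, up to the constant phase, $\log\phi(\ii t)=\log(M\mp 2\ii N)$, and the imaginary part of $\log\phi$ is $\pm\arctan(2N/M)$. This sign flip between the two residues is exactly the dichotomy we want. It gives $D=\pm F(G)$, where $F(G)$ is a fixed integral functional of the ratio $r(t)=N(t)/M(t)$, for example $F=\int_0^\infty (\text{positive weight})\cdot(\text{something involving } \arctan(2r))\,dt$ after integrating by parts.

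The key remaining step, which I expect to be the main obstacle, is proving $F(G)>0$ strictly. After integrating by parts, the integrand naturally involves $\frac{d}{dt}\arctan(2r(t))$ or $t\,\arctan(2r(t))$ minus boundary terms. So I need either positivity of $\arctan(2r(t))$ with a positive weight, which holds trivially since $r>0$, or control of the derivative's sign. I would first aim for a representation of the form $D=\pm\frac{2}{\pi}\int_0^\infty \arctan\bigl(\tfrac{2N(t)}{M(t)}\bigr)\,w(t)\,dt$ with $w\ge 0$. To obtain it, I would derive the formula for $\sum\lambda|\lambda|$ via $\lambda|\lambda| = \frac{2}{\pi}\int_0^\infty\bigl(\lambda-t\arctan(\lambda/t)\bigr)\,dt$, which holds since $\int_0^\infty(1-u\arctan(1/u))\,du=\pi/4$, with appropriate scaling. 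Summing over eigenvalues gives $\sum_j \arctan(\lambda_j/t)=\arg\prod_j(t+\ii\lambda_j)$, which relates to $\arg\phi(G,\ii t)$ (up to the $\ii^n$ phase) $=\mp\arctan(2N/M)$ modulo the $\sum\lambda_j=0$ term. This yields
\[
D=\pm\tfrac{2}{\pi}\int_0^\infty t\arctan\bigl(2N(t)/M(t)\bigr)\,dt .
\]
Convergence follows since $N/M=O(t^{-k})$ with $k\ge3$. The integrand is strictly positive, finishing parts (ii) and (iii). I would double-check the branch of the argument, using continuity from $t=\infty$ where the phase is $0$, and that $M-2\ii^{-k}N$ never vanishes, which is clear since $M>0$.
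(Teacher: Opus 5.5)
Your proposal is correct and is essentially the paper's proof: Sachs' expansion $\phi_G=\mu_G-2\mu_{G-C}$, the evaluation $\ii^{-n}\phi_G(\ii t)=M(t)-2\ii^{-k}N(t)$ with $M,N>0$, the Coulson-type identity $D=-\frac{4}{\pi}\int_0^\infty t\,\Theta_G(t)\,dt$ with $\Theta_G(t)=\sum_j\arctan(\lambda_j/t)$, and fixing the branch by continuity from $t=\infty$. Your per-eigenvalue derivation via $\lambda-t\arctan(\lambda/t)$ is just a slightly more direct route to that identity than the paper's integration by parts. There are minor slips that do not affect the sign argument: the constant in your final formula should be $\frac{4}{\pi}$, not $\frac{2}{\pi}$, since $\int_0^\infty(1-u\arctan(1/u))\,du=\pi/4$; and your exploratory formulas should read $D=\frac{2}{\pi}\int_0^\infty t^2\,\mathrm{Re}\,\frac{\phi'(\ii t)}{\phi(\ii t)}\,dt$ (real part, and $t^2$ rather than $t$).
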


Our proof is based on two facts. First, the characteristic polynomial of a unicyclic graph differs from its matching polynomial by exactly one term, contributed by the unique cycle. Second, a Coulson-type integral expresses $s^+(G)-s^-(G)$ through the argument of the
characteristic polynomial on the imaginary axis. For an odd cycle, the correction term in the
unicyclic Sachs expansion is purely imaginary after the normalization
$\varphi_G(\ii t)\mapsto \ii^{-n}\varphi_G(\ii t)$, and its sign is determined by $k\pmod 4$. It is known that Coulson-type integral is closely related to energy problems, see for example \cite{QiaoZhangNingLi16,QiaoZhangLi17,QiaoZhangLiGao23}.

Throughout this paper, all graphs are simple, finite, and undirected. We denote the adjacency matrix of a graph $G$ by $A(G)$,
and we use the convention $\phi_G(x)=\det(xI-A(G))$ for the characteristic polynomial of $G$.

This paper is organized as follows. In Section \ref{sec-intro}, we give an introduction to our topic. In Section \ref{sec-proof}, we confirm Conjecture \ref{conjunicyc}.

\section{\texorpdfstring{The proof of Theorem~\ref{mainthm}}{The proof of Theorem 1.2}}\label{sec-proof}

We use the following standard normalization of the matching polynomial, see, e.g., Godsil and Gutman~\cite{GodsilGutman81}.

\begin{definition}\label{defmatchpoly}
Let $H$ be a graph of order $n$, and $m_j(H)$ be the number of matchings of size $j$ in $H$. The matching polynomial of $H$ is
\[
 \mu_H(x)=\begin{cases}
    \displaystyle \sum_{j=0}^{\lfloor \frac{n}{2} \rfloor}(-1)^j m_j(H)x^{n-2j}, & \text{if $n \neq 0$};\\
    1,& \text{if $n=0$}.
 \end{cases}
\]
\end{definition}

Before giving the proof of Theorem \ref{mainthm}, we need the following results. We first recall the Sachs expansion for the characteristic polynomial.

\begin{theorem}[Sachs expansion {\cite{Sachs1964}}]\label{sachsexpan}
Let $G$ be a graph of order $n$, and let $\mathcal{E}(G)$ be the set of all elementary subgraphs of $G$, where an
elementary subgraph is a subgraph whose connected components are either single edges or cycles. The empty subgraph is included in $\mathcal{E}(G)$.
For $H\in\mathcal{E}(G)$, let $c(H)$ be the number of connected components of
$H$, $c_{\circ}(H)$ be the number of cycle components of $H$, and let
$v(H)=|V(H)|$. Then
\begin{equation}\label{eqSachspoly}
    \phi_G(x)
=
\sum_{H\in\mathcal{E}(G)}
(-1)^{c(H)}2^{c_{\circ}(H)}x^{n-v(H)}.
\end{equation}
\end{theorem}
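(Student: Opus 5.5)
The plan is to prove the expansion directly from the Leibniz formula for $\det(xI-A(G))$. First I will extract coefficients. Write $A=A(G)=(a_{ij})$. Expanding $\det(xI-A)$ multilinearly in the columns gives
\[
\phi_G(x)=\sum_{r=0}^{n}(-1)^r\Bigl(\sum_{S\subseteq V(G),\,|S|=r}\det A[S]\Bigr)x^{n-r},
\]
where $A[S]$ is the principal submatrix indexed by $S$ and $\det A[\emptyset]=1$. This is the standard fact that the coefficient of $x^{n-r}$ is $(-1)^r$ times the sum of the $r\times r$ principal minors. Since $A[S]=A(G[S])$, it then suffices to compute $\det A(G[S])$ combinatorially and match terms with elementary subgraphs $H$ having $V(H)=S$.

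Next I apply the Leibniz formula $\det A[S]=\sum_{\sigma}\operatorname{sgn}(\sigma)\prod_{i\in S}a_{i\sigma(i)}$, where $\sigma$ ranges over permutations of $S$. A term is nonzero (and then equal to $\pm1$) exactly when $i\sigma(i)\in E(G)$ for every $i\in S$. Because $a_{ii}=0$, such a $\sigma$ has no fixed points. Each of its $2$-cycles $(i\,j)$ is an edge of $G$. Each of its cycles of length $\ell\ge 3$ traces a cycle of length $\ell$ in $G[S]$ together with a chosen direction of traversal. So forgetting orientations sends each contributing $\sigma$ to a spanning elementary subgraph $H$ of $G[S]$, i.e.\ an $H\in\mathcal{E}(G)$ with $V(H)=S$. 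Conversely, each such $H$ is hit by exactly $2^{c_\circ(H)}$ permutations, one for each choice of orientation on each cycle component; edge components admit only one orientation.

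Then I compute the sign. A permutation of $r=v(H)$ points with $c(H)$ cycles has $\operatorname{sgn}(\sigma)=(-1)^{v(H)-c(H)}$. Hence
\[
\det A[S]=\sum_{H\in\mathcal{E}(G),\,V(H)=S}(-1)^{v(H)-c(H)}2^{c_\circ(H)}.
\]
Multiplying by $(-1)^r=(-1)^{v(H)}$ and summing over $S$ gives the term $(-1)^{c(H)}2^{c_\circ(H)}x^{n-v(H)}$ for each $H$. The empty subgraph, with $S=\emptyset$, contributes $x^n$, which proves \eqref{eqSachspoly}.

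The only delicate step is the bijection count in the second paragraph. One must check that $2$-cycles of $\sigma$ must not be counted twice, so they receive factor $1$ rather than $2$. One must also check that cycles of length $\ge3$ genuinely give two distinct permutations. This holds because $\sigma$ restricted to the cycle differs from $\sigma^{-1}$ restricted to it exactly when the length is at least $3$. The rest is bookkeeping.
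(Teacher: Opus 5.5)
Your proof is correct, and it is the standard textbook derivation of Sachs' theorem. You expand $\det(xI-A)$ into principal minors and then expand each minor $\det A[S]$ by the Leibniz formula. You identify the contributing permutations as orientations of spanning elementary subgraphs of $G[S]$, with fiber size $2^{c_\circ(H)}$. Finally you use $\operatorname{sgn}(\sigma)=(-1)^{v(H)-c(H)}$, which combines with the prefactor $(-1)^{v(H)}$ to give $(-1)^{c(H)}$. You also correctly flagged and resolved the only points needing care. Transpositions carry a single orientation. A cyclic permutation of length at least $3$ differs from its inverse, so it yields two distinct permutations. Since elementary subgraphs have no isolated vertices, $V(H)$ is determined by $E(H)$, so the matching with subsets $S$ is a bijection.

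The paper does not prove this statement at all: it simply cites Sachs (1964) and uses the expansion as a black box. There is therefore no competing route to compare with. Your argument supplies a self-contained justification of what the paper takes from the literature, while the citation keeps the paper short.

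Your proof also makes visible exactly the mechanism the paper exploits in Lemma~\ref{lemunicycpoly}. In a unicyclic graph, the contributing permutations are fixed-point-free involutions on edges, i.e.\ matchings, giving $\mu_G$. The only other permutations are the two orientations of the unique cycle $C$ combined with a matching of $F=G-V(C)$. Their sign is $(-1)^{v(H)-c(H)}$, multiplied by $(-1)^{v(H)}$, and this produces the correction term $-2\mu_F(x)$. Your argument thus proves the special case the paper actually needs as directly as the general theorem.
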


Applying Theorem \ref{sachsexpan} to unicyclic graphs gives the following special case.

\begin{lemma}\label{lemunicycpoly}
Let $G$ be a unicyclic graph, $C$ be its unique cycle, and let $F=G-V(C)$.
Then
$\phi_G(x)=\mu_G(x)-2\mu_F(x)$.
\end{lemma}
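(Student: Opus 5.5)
We apply the Sachs expansion of Theorem~\ref{sachsexpan} to $G$ and split the elementary subgraphs $H\in\mathcal{E}(G)$ according to whether they contain the cycle $C$. Since $G$ is unicyclic, $C$ is its only cycle, so every elementary subgraph either has no cycle component or has exactly one, namely $C$.

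\textbf{Subgraphs without a cycle.} Such an $H$ is just a matching with $j$ edges. For it we have
\[
c(H)=j,\qquad c_\circ(H)=0,\qquad v(H)=2j.
\]
So it contributes $(-1)^j x^{n-2j}$ to \eqref{eqSachspoly}. Summing over all matchings of $G$ gives
\[
\sum_{j}(-1)^j m_j(G)\,x^{n-2j}=\mu_G(x),
\]
by Definition~\ref{defmatchpoly}.

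\textbf{Subgraphs containing $C$.} Such an $H$ is $C$ together with a matching $M$ of size $j$ that avoids $V(C)$. Every such $M$ is a matching of $F=G-V(C)$, and every matching of $F$ arises this way. For this $H$ we have
\[
c(H)=1+j,\qquad c_\circ(H)=1,\qquad v(H)=k+2j,
\]
where $k=|V(C)|$. So it contributes $-2(-1)^j x^{n-k-2j}$. Since $|V(F)|=n-k$, summing over all matchings of $F$ gives $-2\mu_F(x)$.

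\textbf{Edge case.} If $F$ is empty, i.e.\ $n=k$, the only subgraph in this class is $C$ itself, contributing $-2x^0=-2$. This agrees with $-2\mu_F(x)$ because $\mu_F=1$ by the convention for order $0$ in Definition~\ref{defmatchpoly}.

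Adding the two classes gives $\phi_G(x)=\mu_G(x)-2\mu_F(x)$. No step is a real obstacle; the only care needed is the bookkeeping of components and the order-zero convention just checked.
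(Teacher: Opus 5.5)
Your proof is correct and follows the paper's argument exactly. You split the Sachs expansion into matchings of $G$, which give $\mu_G(x)$, and subgraphs consisting of $C$ plus a matching of $F$, which give $-2\mu_F(x)$, using the same component and vertex counts; your explicit check of the case $n=k$ via the convention $\mu_F=1$ for order zero is a small point the paper leaves implicit.
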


\begin{proof}
Let $k=|V(C)|$ and $n=|V(G)|$. By Theorem \ref{sachsexpan}, the characteristic
polynomial $\phi_G(x)$ is obtained by summing over all elementary subgraphs of
$G$.

First we consider those elementary subgraphs whose components are only consisting of single edges. These are precisely the matchings of $G$. A matching of size $j$ has
$j$ connected components, no cycle components, and covers $2j$ vertices. Hence
its contribution in \eqref{eqSachspoly} is $(-1)^j x^{n-2j}$.
Summing over all matchings of $G$ gives 
\begin{equation}\label{equnicase1}
\sum_{j=0}^{\lfloor \frac{n}{2} \rfloor} (-1)^{j}m_j(G)x^{n-2j}=\mu_G(x).    
\end{equation}
It remains to consider elementary subgraphs containing a cycle component. Since
$G$ is unicyclic, the only possible cycle component is the unique cycle $C$, which implies that all remaining components must be single edges disjoint from
$C$, and therefore they form a matching in $F=G-V(C)$. If this matching has size $j$, then the corresponding elementary subgraph has
$j+1$ connected components in which there is one cycle component, and covers $k+2j$ vertices.
Hence its contribution in \eqref{eqSachspoly} is $(-1)^{j+1}2x^{n-k-2j}=-2(-1)^jx^{|V(F)|-2j}$. Summing over these elementary subgraphs containing a cycle component gives
\begin{equation}\label{equnicase2}
-2\sum_{j=0}^{\lfloor \frac{n-k}{2} \rfloor}(-1)^jm_j(F)x^{|V(F)|-2j}=-2\mu_F(x).    
\end{equation}

All elementary subgraphs of $G$ are contained in the above two cases. Combining \eqref{equnicase1} and \eqref{equnicase2}, we have $\phi_G(x)=\mu_G(x)-2\mu_F(x)$. This completes the proof.
\end{proof}

Motivated by the classical Coulson integral formula for graph energy
\cite{Coulson40}, we prove the following signed square-energy
analog.

\begin{lemma}[Coulson-type identity]\label{lemcoulson}
Let $G$ be a graph of order $n$ with eigenvalues $\lambda_1\geq \cdots \geq \lambda_n$ of $A(G)$, and let
$\Theta_G(t)=\sum_{i=1}^n\arctan\left(\frac{\lambda_i}{t}\right)$
for $t>0$, where $\arctan\left(\frac{\lambda_i}{t}\right)$ takes values in $(-\frac{\pi}{2},\frac{\pi}{2})$. Then
\begin{equation*}
    s^+(G)-s^-(G)=-\frac{4}{\pi}\int_0^\infty t\Theta_G(t)\,dt.
\end{equation*}
\end{lemma}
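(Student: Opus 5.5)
The plan is to reduce the identity to a computation for a single eigenvalue, using the fact that $\operatorname{tr} A(G)=0$ to make the integral converge. Each term $\arctan(\lambda_i/t)$ behaves like $\lambda_i/t$ as $t\to\infty$, so $t\arctan(\lambda_i/t)$ alone is not integrable at infinity. However, $\sum_i\lambda_i=\operatorname{tr}A(G)=0$ because $G$ has no loops, so I will rewrite
\[
\Theta_G(t)=\sum_{i=1}^n\Bigl(\arctan\Bigl(\frac{\lambda_i}{t}\Bigr)-\frac{\lambda_i}{t}\Bigr).
\]
From $\arctan x = x - x^3/3 + O(x^5)$, each summand is $O(t^{-3})$ as $t\to\infty$, and each summand is bounded near $t=0$ after multiplying by $t$. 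Hence $t\Theta_G(t)$ is integrable on $(0,\infty)$, and the integral splits as a finite sum of convergent integrals
\[
J(\lambda)=\int_0^\infty t\Bigl(\arctan\Bigl(\frac{\lambda}{t}\Bigr)-\frac{\lambda}{t}\Bigr)\,dt .
\]

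Since $s^+(G)-s^-(G)=\sum_i \lambda_i|\lambda_i|$, it suffices to show $J(\lambda)=-\frac{\pi}{4}\lambda|\lambda|$ for every real $\lambda$. For $\lambda=0$ the integrand vanishes. For $\lambda\neq 0$, the integrand is odd in $\lambda$. Substituting $t=|\lambda|u$ then gives $J(\lambda)=\lambda|\lambda|\,I$, where
\[
I=\int_0^\infty\Bigl(u\arctan\Bigl(\frac1u\Bigr)-1\Bigr)\,du .
\]

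To evaluate $I$, I integrate by parts using $\frac{d}{du}\arctan(1/u)=-\frac{1}{1+u^2}$. This gives the antiderivative
\[
F(u)=\frac{u^2}{2}\arctan\Bigl(\frac1u\Bigr)+\frac u2-\frac12\arctan u-u .
\]
Then $F(0^+)=0$. As $u\to\infty$, the expansion $\frac{u^2}{2}\arctan(1/u)=\frac u2-\frac{1}{6u}+O(u^{-3})$ makes the linear terms cancel, so $F(u)\to-\frac{\pi}{4}$. Therefore $I=-\pi/4$, and
\[
-\frac{4}{\pi}\int_0^\infty t\Theta_G(t)\,dt=-\frac4\pi\sum_i\Bigl(-\frac\pi4\Bigr)\lambda_i|\lambda_i|=s^+(G)-s^-(G).
\]

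The only delicate point is convergence. One must insert the $-\lambda_i/t$ correction (justified by the zero trace) before splitting the sum into individual integrals, because the individual integrals of $t\arctan(\lambda_i/t)$ diverge. The boundary limits of $F$ also need care: at infinity the linear terms cancel only after the expansion above is used.
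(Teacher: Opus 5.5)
Your proof is correct. It uses the same integration-by-parts identity as the paper, but applies the zero-trace condition at a different point.

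The paper starts from $\lambda|\lambda|=\frac{2}{\pi}\int_0^\infty\frac{\lambda^3}{\lambda^2+t^2}\,dt$ and sums over the spectrum. It then uses $\sum_i\lambda_i=0$ to rewrite the summed integrand as $t^2\Theta_G'(t)$, and integrates by parts once for the whole sum. The boundary terms $t^2\Theta_G(t)$ vanish because $\Theta_G$ is bounded near $0$ and is $O(t^{-3})$ at infinity. So the paper never splits $\int_0^\infty t\Theta_G(t)\,dt$ into per-eigenvalue pieces, and it needs only the elementary integral $\int_0^\infty\frac{dt}{\lambda^2+t^2}=\frac{\pi}{2|\lambda|}$.

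You instead use the zero trace first, subtracting $\lambda_i/t$ from each summand so that each eigenvalue contributes a separately convergent integral. Scaling and an explicit antiderivative then reduce everything to one constant $I=-\pi/4$. Your antiderivative step (integrating $u$, differentiating $\arctan(1/u)$) is the paper's integration by parts run in the opposite direction.

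What your version buys is transparency. It shows exactly why the trace condition is needed, and it isolates the termwise identity
$\int_0^\infty t\bigl(\arctan(\lambda/t)-\lambda/t\bigr)\,dt=-\frac{\pi}{4}\lambda|\lambda|$ for every real $\lambda$.

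The limit at infinity becomes immediate if you write $F(u)=\frac{u^2}{2}\bigl(\arctan(1/u)-\frac{1}{u}\bigr)-\frac12\arctan u$. The first term is $O(1/u)$, so no cancellation of linear terms needs to be tracked.
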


\begin{proof}
Let $\D(G)=s^+(G)-s^-(G)=\sum_{i=1}^n\lambda_i|\lambda_i|$. We first show that, for each $\lambda_i$,
\begin{equation}\label{eqlambdaab}
    \lambda_i|\lambda_i|=\frac{2}{\pi}\int_0^\infty\frac{\lambda_i^3}{\lambda_i^2+t^2}\,dt.
\end{equation}
In fact, if $\lambda_i= 0$, it is easy to see that \eqref{eqlambdaab} holds; if $\lambda_i\neq 0$, since $\int_0^\infty\frac{dt}{\lambda^2+t^2}=\frac{\pi}{2|\lambda|}$, multiplying both sides by $\lambda^{3}_i$ yields \eqref{eqlambdaab}.

Moreover, for each $\lambda_i$, the integral in \eqref{eqlambdaab} is absolutely convergent since $\int_0^\infty\left|\frac{\lambda^3}{\lambda^2+t^2}\right|\,dt<\infty$.

By \eqref{eqlambdaab} and the fact that $n$ is finite, we have
\begin{equation}\label{eqdeltaG}
 \D(G)=
 \sum_{i=1}^n\frac{2}{\pi}\int_0^\infty\frac{\lambda_i^3}{\lambda_i^2+t^2}\,dt=\frac{2}{\pi}\int_0^\infty
 \sum_{i=1}^n\frac{\lambda_i^3}{\lambda_i^2+t^2}\,dt.
\end{equation}
Since $\sum_i\lambda_i=\tr A(G)=0$, we obtain
\begin{equation}\label{eqsumlambda}
 \sum_{i=1}^n\frac{\lambda_i^3}{\lambda_i^2+t^2}
 =\sum_{i=1}^n\left(\lambda_i-\frac{\lambda_i t^2}{\lambda_i^2+t^2}\right)
 =-t^2\sum_{i=1}^n\frac{\lambda_i}{\lambda_i^2+t^2}.
\end{equation}
On the other hand,
\begin{equation}\label{eqthetade}
 \Theta_G'(t)=\sum_{i=1}^n\frac{d\left(\arctan\!\left(\frac{\lambda_i}{t}\right)\right)}{dt}
 =-\sum_{i=1}^n\frac{\lambda_i}{\lambda_i^2+t^2}.
\end{equation}
Combining \eqref{eqsumlambda} and \eqref{eqthetade} gives
\begin{equation}\label{eqlambdatheta}
 \sum_{i=1}^n\frac{\lambda_i^3}{\lambda_i^2+t^2}=t^2\Theta_G'(t).
\end{equation}
It follows from \eqref{eqlambdatheta} that
$ \frac{2}{\pi}\int_\eps^R
 \sum_{i=1}^n\frac{\lambda_i^3}{\lambda_i^2+t^2}\,dt
 =\frac{2}{\pi}\int_\eps^R t^2\Theta_G'(t)\,dt$ for $0<\eps<R$.
Integrating by parts gives
\begin{equation}\label{eqparts}
 \frac{2}{\pi}\int_\eps^R t^2\Theta_G'(t)\,dt
 =\frac{2}{\pi}\bigl[t^2\Theta_G(t)\bigr]_{\eps}^{R}
 -\frac{4}{\pi}\int_\eps^R t\Theta_G(t)\,dt.
\end{equation}

It remains to verify that $\eps^2\Theta_G(\eps)\to 0$ as $\eps\to 0^{+}$ and $R^2\Theta_G(R)\to 0$ as $R\to\infty$, and that $\int_0^\infty t\Theta_G(t)\,dt$ converges.

Since $\left|\arctan(\frac{\lambda_i}{t})\right|\le \frac{\pi}{2}$ for every $t>0$, we have $|\Theta_G(t)|\le \frac{n\pi}{2}$. Hence $|\eps^2\Theta_G(\eps)|\le \frac{n\pi}{2}\eps^2\to 0$ as $\eps\to 0^{+}$. On the other hand, as $t\to\infty$, we have $\arctan\left(\frac{\lambda_i}{t}\right)=\frac{\lambda_i}{t}+O(t^{-3})$. Since $\sum_i\lambda_i=0$, it follows that $\Theta_G(t)=O(t^{-3})$. Thus $R^2\Theta_G(R)=O(R^{-1})\to 0$ as $R\to\infty$.

Similarly, using $|\Theta_G(t)|\le \frac{n\pi}{2}$ gives $|t\Theta_G(t)|\le \frac{n\pi}{2}t$,
and thus $ \int_0^1 |t\Theta_G(t)|\,dt
 \le \frac{n\pi}{2}\int_0^1 t\,dt<\infty$.
Moreover, using $\Theta_G(t)=O(t^{-3})$ gives $R^2\Theta_G(R)=O(R^{-1})$, and thus there exist constants $C>0$ and $T>1$ such that $|t\Theta_G(t)|\le Ct^{-2}$ for all $t\ge T$. Consequently,
$ \int_T^\infty |t\Theta_G(t)|\,dt
 \le C\int_T^\infty t^{-2}\,dt<\infty$.
Since $\Theta_G(t)$ is a finite sum of continuous functions on $(0,\infty)$,
$t\Theta_G(t)$ is continuous on every compact subinterval of $(0,\infty)$. Thus $ \int_1^T |t\Theta_G(t)|\,dt<\infty$ for every $T>1$.
By these arguments, $\int_0^\infty t\Theta_G(t)\,dt$ is absolutely convergent, and hence convergent.

Letting $\eps\to 0^{+}$ and $R\to\infty$ in \eqref{eqparts} and using
\eqref{eqdeltaG} together with \eqref{eqlambdatheta}, we obtain $ \D(G)=-\frac{4}{\pi}\int_0^\infty t\Theta_G(t)\,dt$. This completes the proof.
\end{proof}

\begin{proof}[\textbf{Proof of Theorem \ref{mainthm}}]
Since $G$ is unicyclic, we have
\begin{equation}\label{eqsumsquare0}
 s^+(G)+s^-(G)
 =\sum_{i=1}^n\lambda_i(G)^2
 =\tr A(G)^2
 =2|E(G)|
 =2n.
\end{equation}
Let $\D(G)=s^+(G)-s^-(G)$.
It is enough to determine the sign of $\D(G)$. Let $C$ be the unique cycle of $G$, and let $F=G-V(C)$. For a graph $H$ and $t>0$, define
\begin{equation}\label{eqdefMH}
   M_H(t)=\ii^{-v(H)}\mu_H(\ii t),
\end{equation}
where $\ii$ denotes the imaginary unit, and $\mu_H$ denotes the matching polynomial of $H$ defined in
Definition \ref{defmatchpoly}. To finish the proof, we establish the following claims.
\begin{claim}\label{claimmatch}
For every graph $H$ and every $t>0$, $M_H(t)=\sum_{j\ge 0}^{\lfloor \frac{v(H)}{2} \rfloor}m_j(H)t^{v(H)-2j}>0$, where $v(H)=|V(H)|$.
Moreover, $M_H(t)=t^{v(H)}\bigl(1+O(t^{-2})\bigr)$ as $t\to\infty$. 
\end{claim}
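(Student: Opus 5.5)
The claim follows from a direct substitution into Definition~\ref{defmatchpoly}, so the plan is simply to compute $M_H(t)$ term by term. Write $v=v(H)$. If $v=0$, then $\mu_H=1$ and $\ii^{0}=1$, so $M_H(t)=1=m_0(H)t^0>0$ and both assertions are immediate. For $v\ge 1$ we substitute $x=\ii t$:
\[
\mu_H(\ii t)=\sum_{j=0}^{\lfloor v/2\rfloor}(-1)^j m_j(H)\,\ii^{v-2j}t^{v-2j}.
\]
Since $\ii^{-2j}=(-1)^j$, we have $\ii^{v-2j}=\ii^{v}(-1)^j$. The two factors $(-1)^j$ therefore cancel, and multiplying by $\ii^{-v}$ gives
\[
M_H(t)=\sum_{j=0}^{\lfloor v/2\rfloor} m_j(H)\,t^{v-2j}.
\]
This is the stated formula.

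Positivity comes from the fact that every $m_j(H)\ge 0$ and $m_0(H)=1$, because the empty matching is the unique matching of size $0$. For $t>0$ every summand is therefore nonnegative, and the $j=0$ summand equals $t^{v}>0$. Hence $M_H(t)\ge t^{v}>0$.

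For the asymptotics we factor out $t^{v}$:
\[
M_H(t)=t^{v}\Bigl(1+\sum_{j=1}^{\lfloor v/2\rfloor} m_j(H)\,t^{-2j}\Bigr).
\]
For $t\ge 1$ we have $t^{-2j}\le t^{-2}$ whenever $j\ge 1$. The bracketed sum is thus at most $\bigl(\sum_{j\ge1} m_j(H)\bigr)t^{-2}$, and the coefficient in front of $t^{-2}$ is a constant depending only on $H$. This gives $M_H(t)=t^{v}\bigl(1+O(t^{-2})\bigr)$.

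None of these steps is a genuine obstacle. The only point requiring care is the exponent bookkeeping $\ii^{v-2j}\cdot\ii^{-v}=(-1)^j$, which is what turns the alternating signs into all-positive coefficients, together with handling the convention $\mu_H=1$ for the empty graph separately.
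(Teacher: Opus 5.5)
Your proposal is correct and follows essentially the same route as the paper. You substitute $x=\ii t$, use $\ii^{v-2j}\ii^{-v}=(-1)^j$ to cancel the alternating signs, and then read off positivity and the $t^{v}\bigl(1+O(t^{-2})\bigr)$ asymptotics from the $j=0$ term $t^{v}$. Your separate treatment of $v=0$ and your explicit use of $m_0(H)=1$ (so that a sum of nonnegative terms is strictly positive) make precise what the paper leaves implicit.
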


\begin{proof}[\textbf{Proof of Claim~\ref{claimmatch}}]
By Definition \ref{defmatchpoly}, we have
\[\mu_H(\ii t)=\sum_{j\ge 0}^{\lfloor \frac{v(H)}{2} \rfloor}(-1)^jm_j(H)(\ii t)^{v(H)-2j}=\sum_{j\ge 0}^{\lfloor \frac{v(H)}{2} \rfloor}\ii^{v(H)}m_j(H)t^{v(H)-2j}.\]
Combining this with \eqref{eqdefMH}, we obtain 
\begin{equation}\label{eqMHsum}
    M_H(t)=\sum_{j\ge 0}^{\lfloor \frac{v(H)}{2} \rfloor}m_j(H)t^{v(H)-2j}.
\end{equation}
On the right-hand side of \eqref{eqMHsum}, all terms are nonnegative for $t>0$, thus the sum is positive. Moreover, the term corresponding to $j=0$ is $t^{v(H)}$. It follows from \eqref{eqMHsum} that $M_H(t)=t^{v(H)}\bigl(1+O(t^{-2})\bigr)$ as $t\to\infty$. This completes the proof of Claim \ref{claimmatch}.
\end{proof}

\begin{claim}\label{claimargu}
Let $\Theta_G(t)$ be defined as in Lemma \ref{lemcoulson}. Then
\begin{equation*}
    \Theta_G(t)=
 \begin{cases}
 \displaystyle \arctan\left(\frac{2M_F(t)}{M_G(t)}\right), & k\equiv 1\pmod 4;\\[3mm]
 \displaystyle -\arctan\left(\frac{2M_F(t)}{M_G(t)}\right), & k\equiv 3\pmod 4.
 \end{cases}
\end{equation*}
Consequently, $\Theta_G(t)>0$ for all $t>0$ when $k\equiv 1\pmod 4$, and $\Theta_G(t)<0$ for all $t>0$ when $k\equiv 3\pmod 4$.
\end{claim}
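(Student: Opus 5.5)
We will compute the argument of the normalized characteristic polynomial $\ii^{-n}\phi_G(\ii t)$ in two ways and compare them.

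\emph{Product form.} Since $\phi_G(x)=\prod_{i=1}^n(x-\lambda_i)$, we have
\[
\ii^{-n}\phi_G(\ii t)=\prod_{i=1}^n \ii^{-1}(\ii t-\lambda_i)=\prod_{i=1}^n\bigl(t+\ii\lambda_i\bigr).
\]
For $t>0$, each factor $t+\ii\lambda_i$ has positive real part. Its principal argument is therefore $\arctan(\lambda_i/t)\in(-\frac{\pi}{2},\frac{\pi}{2})$. Consequently
\[
\ii^{-n}\phi_G(\ii t)=\rho(t)e^{\ii\Theta_G(t)},\qquad \rho(t)=\prod_{i=1}^n\sqrt{t^2+\lambda_i^2}>0,
\]
where $\Theta_G(t)$ is the sum of the factor arguments, not reduced modulo $2\pi$.

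\emph{Lemma~\ref{lemunicycpoly} form.} By Lemma~\ref{lemunicycpoly} and $v(F)=n-k$,
\[
\ii^{-n}\phi_G(\ii t)=M_G(t)-2\,\ii^{-k}M_F(t).
\]
For odd $k$ we have $\ii^{-k}=-\ii$ when $k\equiv 1\pmod 4$, and $\ii^{-k}=\ii$ when $k\equiv 3\pmod 4$. Thus
\[
\ii^{-n}\phi_G(\ii t)=M_G(t)\pm 2\ii M_F(t),
\]
with $+$ for $k\equiv1$ and $-$ for $k\equiv 3 \pmod 4$. By Claim~\ref{claimmatch}, $M_G(t)>0$ and $M_F(t)>0$, with the convention $M_F=1$ if $F$ is empty. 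So this complex number lies in the open right half-plane. Its principal argument is $\pm\arctan\bigl(2M_F(t)/M_G(t)\bigr)$, which lies strictly between $-\frac{\pi}{2}$ and $\frac{\pi}{2}$ and is nonzero.

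\emph{Matching the two expressions.} The main obstacle is the branch: $\Theta_G(t)$ is a sum of $n$ arguments and could a priori differ from the principal argument by a multiple of $2\pi$. Both $\Theta_G(t)$ and $\psi(t)=\pm\arctan(2M_F/M_G)$ are continuous on $(0,\infty)$, since $M_G$ never vanishes. Both satisfy $e^{\ii\Theta_G}=e^{\ii\psi}$, because the two expressions above are the same nonzero complex number. Hence $(\Theta_G-\psi)/(2\pi)$ is a continuous integer-valued function, and so it is constant.

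To identify the constant, let $t\to\infty$. Each $\arctan(\lambda_i/t)\to0$, so $\Theta_G(t)\to0$. By Claim~\ref{claimmatch}, $2M_F/M_G\sim 2t^{-k}\to0$, so $\psi(t)\to 0$. Therefore the constant is $0$, and $\Theta_G=\psi$ on $(0,\infty)$.

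The sign conclusions follow at once. The quantity $2M_F(t)/M_G(t)$ is strictly positive for every $t>0$, so $\arctan$ of it is strictly positive. Hence $\Theta_G(t)>0$ for all $t>0$ when $k\equiv 1\pmod 4$, and $\Theta_G(t)<0$ for all $t>0$ when $k\equiv 3\pmod 4$.
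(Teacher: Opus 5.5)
Your proposal is correct and follows the paper's proof essentially step for step. Like the paper, you write $\ii^{-n}\phi_G(\ii t)$ both as $\prod_{i=1}^n(t+\ii\lambda_i)$ and, via Lemma~\ref{lemunicycpoly}, as $M_G(t)\pm 2\ii M_F(t)$, then remove the $2\pi$-ambiguity by continuity and the limit $t\to\infty$. Your explicit remark that $M_F=1$ when $F$ is empty is a small detail the paper leaves implicit.
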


\begin{proof}[\textbf{Proof of Claim \ref{claimargu}}]
By the definition of $\Theta_G(t)$, for $t>0$,
\begin{equation}\label{eqdefThetaG}
\Theta_G(t)=\sum_{i=1}^n\arctan\left(\frac{\lambda_i}{t}\right).
\end{equation}
By Lemma \ref{lemunicycpoly},
$\phi_G(\ii t)=\mu_G(\ii t)-2\mu_F(\ii t).$
Using Claim \ref{claimmatch}, we have
$\phi_G(\ii t)=\ii^nM_G(t)-2\ii^{n-k}M_F(t),$
and hence
\begin{equation*}
 \ii^{-n}\phi_G(\ii t)=M_G(t)-2\ii^{-k}M_F(t).
\end{equation*}
If $k\equiv 1\pmod 4$, then $\ii^{-k}=-\ii$, and thus
$\ii^{-n}\phi_G(\ii t)=M_G(t)+2\ii M_F(t)$.
If $k\equiv 3\pmod 4$, then $\ii^{-k}=\ii$, and thus
$\ii^{-n}\phi_G(\ii t)=M_G(t)-2\ii M_F(t)$.
In both cases, Claim~\ref{claimmatch} gives $M_G(t)>0$ and $M_F(t)>0$. It follows that $\ii^{-n}\phi_G(\ii t)$ has positive real part for every $t>0$. Let $\Psi_{G}(t)$ denote the principal argument of $\ii^{-n}\phi_G(\ii t)$. Then
\begin{equation*}
    \Psi_G(t)=
 \begin{cases}
 \displaystyle \arctan\!\left(\frac{2M_F(t)}{M_G(t)}\right), & k\equiv 1\pmod 4;\\[3mm]
 \displaystyle -\arctan\!\left(\frac{2M_F(t)}{M_G(t)}\right), & k\equiv 3\pmod 4.
 \end{cases}
\end{equation*}

Next we show $\Theta_{G}(t)=\Psi_{G}(t)$. A straightforward calculation gives $\ii^{-n}\phi_G(\ii t)=\prod_{i=1}^n(t+\ii\lambda_i)$. Since $t+\ii\lambda_i$ has positive real part, the principal argument of each
factor $t+\ii\lambda_i$ is
\[
 \operatorname{Arg}(t+\ii\lambda_i)
 =\arctan\!\left(\frac{\lambda_i}{t}\right)
 \in \left(-\frac{\pi}{2},\frac{\pi}{2}\right).
\]
Therefore, by \eqref{eqdefThetaG}, $\Theta_G(t)$ is a continuous argument of
$\ii^{-n}\phi_G(\ii t)$. Now we have $\Theta_G(t)-\Psi_{G}(t)=2q\pi$ for every $t>0$, where $q \in \mathbb{Z}$ is a constant. Since both $\Theta_G(t)$ and $\Psi_{G}(t)$ are continuous on $(0,\infty)$, the
function $\Theta_G(t)-\Psi_{G}(t)$ is continuous. Moreover, $\Theta_G(t)\to 0$ as $t\to\infty$ by \eqref{eqdefThetaG}. Also, Claim \ref{claimmatch} gives
\[
 \frac{2M_F(t)}{M_G(t)}=O(t^{-k}),
\]
and hence $\Psi_{G}(t)\to 0$ as $t\to\infty$. It follows that the constant
$q$ is equal to $0$. Thus $\Theta_G(t)=\Psi_{G}(t)$ for all $t>0$.

Finally, since $M_G(t)>0$ and $M_F(t)>0$ for every $t>0$, we have
\[
 \arctan\!\left(\frac{2M_F(t)}{M_G(t)}\right)>0.
\]
Thus $\Theta_G(t)>0$ for all $t>0$ when $k\equiv 1\pmod 4$, and
$\Theta_G(t)<0$ for all $t>0$ when $k\equiv 3\pmod 4$. This completes the proof of the claim.
\end{proof}

By Lemma \ref{lemcoulson},
\begin{equation}\label{eqdelta2}
 \D(G)=-\frac{4}{\pi}\int_0^\infty t\Theta_G(t)\,dt.
\end{equation}

If $k\equiv 0\pmod 2$, then $G$ is a bipartite graph since $G$ contains no cycles with odd length. Hence $s^{+}(G)=s^{-}(G)$. Combining this with \eqref{eqsumsquare0} gives $s^{+}(G)=s^{-}(G)=n$.

If $k\equiv 1\pmod 4$, then Claim \ref{claimargu} gives $\Theta_G(t)>0$ for all $t>0$. Hence \eqref{eqdelta2} yields $\D(G)<0$. Combining this with \eqref{eqsumsquare0} gives $s^+(G)=n+\frac{\D(G)}2<n$ and $s^-(G)=n-\frac{\D(G)}2>n$.

If $k\equiv 3\pmod 4$, then Claim \ref{claimargu} gives $\Theta_G(t)<0$ for all $t>0$. Hence \eqref{eqdelta2} yields $\D(G)>0$. Combining this with \eqref{eqsumsquare0} gives $s^+(G)=n+\frac{\D(G)}2>n$ and $s^-(G)=n-\frac{\D(G)}2<n$.

This completes the proof.
\end{proof}

	
\bibliographystyle{unsrt}
 
\end{document}